\newtheorem{thm}{Theorem}[section]
\newtheorem{cor}[thm]{Corollary}
\newtheorem{lem}[thm]{Lemma}
\newtheorem{prop}[thm]{Proposition}
\numberwithin{equation}{section}
\begin{document}

\leftline{ \scriptsize}

\vspace{1.3 cm}
\title
{Some remarks on permanent dominant conjecture}
\author{ Kijti Rodtes $^{\ast}$ }
\thanks{{\scriptsize
		\newline MSC(2010): 15A15, 15B48.  \\ Keywords: Permanent dominant conjecture, Generalized Cauchy-Binet Theorem. \\
		E-mail addresses:  kijtir@nu.ac.th (Kijti Rodtes).\\
		$^{*}$Research center for Academic Excellence in Mathematics, Department of Mathematics, Faculty of Science, Naresuan University, Phitsanulok 65000, Thailand.\\
		\\}}
\hskip -0.4 true cm

\maketitle


\begin{abstract} In this paper we provide an identity between determinant and generalized matrix function.  Also, a criterion of positive semi-definite matrices affirming the permanent dominant conjecture is given.  As a consequence, infinitely many infinite classes of positive semi-definite serving the conjecture (does not depend on groups or characters) are provided by generating from any positive semi-definite matrix having no zero in the first column.
\end{abstract}

\vskip 0.2 true cm


\pagestyle{myheadings}
\markboth{\rightline {\scriptsize Kijti Rodtes}}
{\leftline{\scriptsize }}
\bigskip
\bigskip


\vskip 0.4 true cm

\section{Introduction}

Throughout this paper, we denote $M_n(\mathbb{C})$ the set of square matrices over the field of complex numbers $\mathbb{C}$.  If $A\in M_n(\mathbb{C})$ is a positive semi-definite matrix, then it is well known that (see, for example, Chapter 7 in \cite{MerrisB}) $h(A)\geq\det(A)\geq 0$ (Hadamard inequality) and $\det(B)\det(C)\geq \det(A)$ (Fischer inequality), where $h(A)=\prod_{i=1}^n A_{ii}$ is the product of the main diagonal entries of $A=(A_{ij})$ and $B, C$ are square main diagonal blocks (any size) of $A$.  For a finite group $G$ embedded in the full symmetric group $S_n$ and character $\chi$ of $G$, Issai Schur (\cite{Schur}, 1918), introduced the notion of \textit{generalized matrix function }$d^G_\chi:M_n(\mathbb{C})\longrightarrow \mathbb{C}$ as 
$$d^G_\chi(A)=\sum_{g\in G}\chi(g)\prod_{i=1}^n A_{ig(i)}.  $$
If $G=S_n$ and $\pi$ is a character of $S_n$, then $d^G_\chi(A)$ is called an \textit{immanant}.  Permanent and determinant are immanants with $\chi=1$ (the principle character) and $\chi=\epsilon$ (alternating character), respectively.  Also, $h(A)=d^{\{e\}}_1(A)$ and $\det(B)\det(C)=d^{S_m\times S_k}_{(\epsilon_1,\epsilon_2)}(A)$, where $\{e\}$ is the trivial group and $m,k$ are sizes of square diagonal  blocks $B,C$ of $A$ and $\epsilon_1,\epsilon_2$ are alternating characters of $S_m,S_k$, respectively.  Issai Schur also extended the Hadamard  inequality and Fisher inequality as the \textit{Schur  inequality}, \cite{Schur}: $$\bar{d}^G_\chi(A)\geq \det(A),  $$ for any positive semi-definite matrix $A\in M_n(\mathbb{C})$ and character $\chi$ of $G$, where $\bar{d}^G_\chi(A)$ is the normalized of $d^G_\chi(A)$, i.e., $\bar{d}^G_\chi(A):=(1/\chi(e))d^G_\chi(A)$.

In the opposite direction of the Schur inequality, Marvin Marcus showed in 1963 that, \cite{Marcus}, $$\operatorname{per}(A)\geq h(A),$$ for any positive semi-definite matrix $A$. Three years later,  Elliott Hershel Lieb obtained that, \cite{Lieb}, $$\operatorname{per}(A)\geq \operatorname{per}(B) \operatorname{per}(C), $$
where $B,C$ are square diagonal blocks of the positive semi-definite matrix $A$.  He also raised the \textit{Permanent Dominant Conjecture} as: 
$$ \bar{d}^G_\chi(A)\leq \operatorname{per}(A), $$
for any positive semi-definite matrix $A\in M_n(\mathbb{C})$ and irreducible character $\chi$ of $G\leq S_n$.

This conjecture still opens up to this date.  There are progresses on this conjecture but most of them concentrate on  particular groups and characters; especially on immanants, which is known to satisfy the conjecture when $n\leq 13$, \cite{Pate}.  Also, according to Lieb inequality, the conjecture holds true for any Young subgroup of $S_n$ and $\chi=1$, \cite{merrisandwatkin}.  Further details of the progress to this conjecture along this direction and related conjectures can be found, for example, in \cite{Wanless}, \cite{Zhang} and references therein. 

On the other hand, it is quite obvious to see that, for any positive semi-definite matrix $A\in M_n(\mathbb{C})$, if $A$ is a non-negative real  matrix, or $A$ has rank $1$, or $A$ contains a zero row (or column), or $A$ has a zero submatrix of dimension $r\times s$ with $r+s\geq n+1$, then $A$ satisfies the conjecture (see more details in the next section). However, besides these classes of matrices, it seems that there is no new explicit class of positive semi-definite matrices provided to serve the conjecture.  In this paper, we use generalized Cauchy-Binet theorem together with character theory to see some relationship (identity) between determinant and generalized matrix functions, in Theorem \ref{maintheorem}.  A criterion of positive semi-definite matrices affirming the conjecture is given in Corollary \ref{cor2}.  As a consequence, infinitely many infinite classes of positive semi-definite serving the conjecture are generated from any positive semi-definite matrix having no zero in the first column, in Theorem \ref{main3}.

\section{Some basic remarks}
Throughout this section, we let $A=(A_{ij})\in M_n(\mathbb{C})$ be a positive semi-definite matrix and let $G\leq S_n$ be a subgroup of $S_n$.  For a character $\chi$ of $G$, it is well known in character theory that $\chi$ is a linear combination of irreducible characters of $G$ with non-negative integer coefficients.  We then have that:
\begin{prop}\label{pp1}
	If $\bar{d}^G_\chi(A)\leq \operatorname{per}(A)$ for all irreducible characters $\chi$ of $G$, then $\bar{d}^G_\lambda(A)\leq \operatorname{per}(A)$ for all characters $\lambda$ of $G$.
\end{prop}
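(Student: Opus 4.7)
The plan is to decompose $\lambda$ into irreducibles and exploit linearity of $d^G_\chi(A)$ in $\chi$, then recognize the normalized quantity $\bar d^G_\lambda(A)$ as a convex combination of the normalized irreducible values $\bar d^G_{\chi_i}(A)$.

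First I would write $\lambda = \sum_{i} n_i \chi_i$, where the sum runs over the distinct irreducible characters $\chi_1,\dots,\chi_r$ of $G$ and each $n_i$ is a non-negative integer (as recalled in the statement preceding the proposition). From the definition
\[
d^G_\chi(A)=\sum_{g\in G}\chi(g)\prod_{i=1}^n A_{ig(i)},
\]
the map $\chi\mapsto d^G_\chi(A)$ is $\mathbb{C}$-linear in $\chi$ for any fixed $A$, so
\[
d^G_\lambda(A)=\sum_{i=1}^r n_i\, d^G_{\chi_i}(A),\qquad \lambda(e)=\sum_{i=1}^r n_i\,\chi_i(e).
\]

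Dividing the first identity by $\lambda(e)>0$ and multiplying the $i$-th summand by $\chi_i(e)/\chi_i(e)$ converts it into
\[
\bar d^G_\lambda(A)=\sum_{i=1}^r \frac{n_i\,\chi_i(e)}{\lambda(e)}\,\bar d^G_{\chi_i}(A),
\]
which is a convex combination since the coefficients $w_i:=n_i\chi_i(e)/\lambda(e)$ are non-negative and sum to $1$. The hypothesis gives $\bar d^G_{\chi_i}(A)\le\operatorname{per}(A)$ for each $i$, so the conclusion $\bar d^G_\lambda(A)\le\operatorname{per}(A)$ follows termwise.

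There is really no difficult step here; the only subtlety to mention is that for a positive semi-definite $A$ and an irreducible character $\chi_i$, the number $\bar d^G_{\chi_i}(A)$ is a (non-negative) real, so comparing it with the real number $\operatorname{per}(A)$ is legitimate and passes through the convex combination. This is a standard consequence of the Schur-type positivity $d^G_{\chi_i}(A)\ge 0$ for PSD $A$ implicit in the discussion above, and it is what allows the weighted-average inequality to close the argument.
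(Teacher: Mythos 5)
Your proof is correct and matches the paper's intended argument: the paper gives no explicit proof, simply noting beforehand that every character is a non-negative integer combination of irreducibles, and your convex-combination computation is exactly the routine verification being left to the reader. (The closing remark about non-negativity of $\bar d^G_{\chi_i}(A)$ is unnecessary --- only realness is needed for the weighted average to preserve the inequality --- but this does not affect correctness.)
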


For each $g\in G$, denote $l_g(A):=\prod_{i=1}^n A_{ig(i)}$; so $d^G_\chi(A)=\sum_{g\in G}\chi(g)l_g(A)$.  It is also well known in character theory that $|\chi(g)|\leq \chi(e)$ for all $g\in G$, where $e$ is the identity of $G$.  Then $|\chi(g)|/\chi(e)\leq 1$ which yields that
$$ \bar{d}^G_\chi(A)=\frac{1}{\chi(e)}\sum_{g\in G}\chi(g)l_g(A)\leq \sum_{g\in G}\frac{|\chi(g)|}{\chi(e)}|l_g(A)|\leq \sum_{g\in G} |l_g(A)|\leq \sum_{\sigma\in S_n} |l_\sigma(A)|.  $$
We then have that:
\begin{prop}\label{posdia}
If $l_\sigma(A)\geq 0$ for all $\sigma\in S_n$, then $\bar{d}^G_\chi(A)\leq \operatorname{per}(A)$ for any character $\chi$ of $G$.  In particular, if $A$ is a non-negative real matrix (including diagonal positive semi-definite matrix), then $A$ satisfies the permanent dominant conjecture.
\end{prop}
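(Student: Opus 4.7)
The plan is to leverage the inequality chain already displayed immediately before the statement of the proposition, and then read off both conclusions as essentially immediate consequences.

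First, I would recall that for any character $\chi$ of $G$, the inequality
$$\bar{d}^G_\chi(A) \;\leq\; \sum_{g\in G}\frac{|\chi(g)|}{\chi(e)}|l_g(A)| \;\leq\; \sum_{g\in G}|l_g(A)| \;\leq\; \sum_{\sigma\in S_n}|l_\sigma(A)|$$
has already been established, using only the triangle inequality, the character-theoretic bound $|\chi(g)|\leq \chi(e)$, and the obvious inclusion $G \subseteq S_n$ (the last sum simply adds non-negative terms for $\sigma \notin G$). So no new work is needed there.

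Next, I would invoke the standing hypothesis $l_\sigma(A)\geq 0$ for every $\sigma \in S_n$. This allows $|l_\sigma(A)|$ to be replaced by $l_\sigma(A)$ throughout the last expression, giving
$$\bar{d}^G_\chi(A) \;\leq\; \sum_{\sigma\in S_n} l_\sigma(A) \;=\; \sum_{\sigma \in S_n}\prod_{i=1}^n A_{i\sigma(i)} \;=\; \operatorname{per}(A),$$
which is exactly the desired estimate. For the \emph{in particular} clause, I would observe that if $A$ is entrywise non-negative real (in particular, if $A$ is diagonal with non-negative diagonal, which is the positive semi-definite diagonal case), then $l_\sigma(A) = \prod_{i=1}^n A_{i\sigma(i)}$ is a product of non-negative reals and hence non-negative, so the hypothesis of the first part is automatically fulfilled.

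There is essentially no obstacle: the proof is a direct application of the displayed chain together with the definition of the permanent. The only point requiring a brief word is that $\bar{d}^G_\chi(A)$, written as the left-hand side of a real inequality, is implicitly understood as a real quantity; this is consistent with the fact that $d^G_\chi$ evaluated on positive semi-definite matrices is known to be real, so the comparison with $\operatorname{per}(A)$ is meaningful.
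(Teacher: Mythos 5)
Your proposal is correct and follows essentially the same route as the paper: the displayed inequality chain preceding the proposition, combined with the hypothesis $l_\sigma(A)\geq 0$ to drop the absolute values and identify the final sum with $\operatorname{per}(A)$, is exactly the paper's (implicit) argument. Your added remark that $\bar{d}^G_\chi(A)$ must be real for the first inequality in the chain to make sense is a small point the paper glosses over, and it is correctly resolved as you say.
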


We observe that if $A$ contains a zero row or zero column,  then $l_\sigma(A)=0$ for all $\sigma\in S_n$ (because $\sigma:[n]\longrightarrow [n]$ is one to one and onto function, where $[n]:=\{1,2,\dots,n\}$).  Also, by Frobenius-Konig theorem (see, for example, Theorem 5.20 in \cite{Zhang2}), $l_\sigma(A)=0$ for all $\sigma\in S_n$ if and only if $A$ contains an $r\times s$ zero submatrix, where $r+s=n+1$.  We then have that:
\begin{prop}
	If $A$ has a zero row or zero column or $A$ contains an $r\times s$ zero submatrix, where $r+s=n+1$, then $A$ satisfies the permanent dominant conjecture.
\end{prop}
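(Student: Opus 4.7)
The plan is to observe that in each of the three hypothesized situations the quantity $l_\sigma(A) = \prod_{i=1}^n A_{i\sigma(i)}$ vanishes for every $\sigma \in S_n$, and then to invoke Proposition \ref{posdia} (or simply note that both sides of the target inequality become $0$).

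First I would dispose of the zero row/column case directly. If the $i_0$-th row of $A$ is zero, then for any $\sigma \in S_n$ the factor $A_{i_0\sigma(i_0)}$ in $l_\sigma(A)$ is $0$, so $l_\sigma(A) = 0$. The column case is symmetric: since $\sigma$ is a bijection of $[n]$, some $i$ satisfies $\sigma(i) = j_0$ where the $j_0$-th column is zero, and again the corresponding factor kills the product. Either way, $l_\sigma(A) = 0$ for every $\sigma$.

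Next I would handle the $r \times s$ zero submatrix case with $r+s = n+1$ by quoting the Frobenius--K\"onig theorem (the reference Theorem 5.20 in \cite{Zhang2} has already been cited in the paragraph above the proposition). The content of that theorem is precisely the equivalence: $l_\sigma(A) = 0$ for all $\sigma \in S_n$ if and only if $A$ contains such a zero submatrix. So the assumption again forces $l_\sigma(A) = 0$ for every $\sigma \in S_n$.

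Finally, since $l_\sigma(A) = 0$ for all $\sigma \in S_n$, we trivially have $l_\sigma(A) \geq 0$ for all $\sigma$, and Proposition \ref{posdia} yields $\bar{d}^G_\chi(A) \leq \operatorname{per}(A)$ for any character $\chi$ of $G$. (Alternatively one can just observe that $\operatorname{per}(A) = \sum_{\sigma \in S_n} l_\sigma(A) = 0$ and $d^G_\chi(A) = \sum_{g\in G}\chi(g)l_g(A) = 0$, so both sides are equal to $0$.) There is no real obstacle here: the proof is essentially a bookkeeping consequence of the preceding two remarks, with the Frobenius--K\"onig theorem doing all the combinatorial work in the submatrix case. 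If anything, the only point worth being careful about is verifying that the row/column case is in fact subsumed by the submatrix case (a zero row is a $1 \times n$ zero submatrix and a zero column is an $n \times 1$ zero submatrix, both with $r+s = n+1$), so the statement could even be compressed, but stating the row/column case separately makes the proposition easier to apply.
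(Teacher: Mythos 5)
Your proposal is correct and follows essentially the same route as the paper: the zero row/column case forces $l_\sigma(A)=0$ for every $\sigma\in S_n$ since $\sigma$ is a bijection, the Frobenius--K\"onig theorem handles the $r\times s$ zero submatrix case, and Proposition \ref{posdia} then yields the conclusion. Your extra remark that the row/column case is a special instance of the submatrix case is a nice observation but does not change the argument.
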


Let $O_n$ be the set of all odd permutations in $S_n$.  By Schur inequality, $\det(A)\leq \operatorname{per}(A)$; namely, $$\sum_{\sigma\in A_n}l_\sigma(A)-\sum_{\sigma\in O_n}l_\sigma(A)\leq \sum_{\sigma\in A_n}l_\sigma(A)+\sum_{\sigma\in O_n}l_\sigma(A),    $$
which means that $\sum_{\sigma\in O_n}l_\sigma(A) \geq 0$.  We can conclude that:
\begin{prop}\label{propan}
	For the alternating group $A_n$ and the principle character $1$, 
$$d^{A_n}_1(A)\leq \operatorname{per}(A).$$ 
\end{prop}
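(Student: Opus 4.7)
The plan is essentially to exploit the discussion that already precedes the statement. Since the principal character satisfies $1(e) = 1$, we have $\bar d^{A_n}_1(A) = d^{A_n}_1(A) = \sum_{\sigma \in A_n} l_\sigma(A)$, while $\operatorname{per}(A) = \sum_{\sigma \in A_n} l_\sigma(A) + \sum_{\sigma \in O_n} l_\sigma(A)$. So the desired inequality $d^{A_n}_1(A) \leq \operatorname{per}(A)$ is equivalent to the single statement
$$\sum_{\sigma \in O_n} l_\sigma(A) \geq 0.$$

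First I would recall the expansion $\det(A) = \sum_{\sigma \in A_n} l_\sigma(A) - \sum_{\sigma \in O_n} l_\sigma(A)$, which is just the sign-splitting of the determinant formula. Combining this with the Schur inequality $\det(A) \leq \operatorname{per}(A)$ (which is applicable because $A$ is positive semi-definite) yields
$$\sum_{\sigma \in A_n} l_\sigma(A) - \sum_{\sigma \in O_n} l_\sigma(A) \leq \sum_{\sigma \in A_n} l_\sigma(A) + \sum_{\sigma \in O_n} l_\sigma(A),$$
and cancelling the common term on both sides gives $\sum_{\sigma \in O_n} l_\sigma(A) \geq 0$, as needed.

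Substituting back completes the argument: $d^{A_n}_1(A) = \operatorname{per}(A) - \sum_{\sigma \in O_n} l_\sigma(A) \leq \operatorname{per}(A)$. There is really no main obstacle here; the entire content lies in the observation (made in the paragraph immediately preceding the proposition) that the Schur inequality combined with the sign decomposition of $\det$ and $\operatorname{per}$ forces the odd-permutation sum to be non-negative, and that this non-negativity is precisely the gap between $d^{A_n}_1(A)$ and $\operatorname{per}(A)$.
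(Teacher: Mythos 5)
Your proposal is correct and follows exactly the paper's argument: the paper likewise derives $\sum_{\sigma\in O_n}l_\sigma(A)\geq 0$ from the Schur inequality $\det(A)\leq \operatorname{per}(A)$ via the sign-splitting of $\det$ and $\operatorname{per}$, and then observes that this non-negative quantity is precisely $\operatorname{per}(A)-d^{A_n}_1(A)$. No differences worth noting.
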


Furthermore, if $\operatorname{rank}(A)=1$, then $A=vv^*$ for some $v=(a_1,a_2,\dots,a_n)^t\in \mathbb{C}^n$ (since $A$ is a positive semi-definite matrix). Then, for each $\sigma\in S_n$, $$l_\sigma(A)=\prod_{i=1}^nA_{i\sigma(i)}=\prod_{i=1}^na_i\bar{a}_{\sigma(i)}=(\prod_{i=1}^n a_i)(\prod_{i=1}^n \bar{a}_{\sigma(i)})=(\prod_{i=1}^n a_i)\overline{(\prod_{i=1}^n a_i)}=|\prod_{i=1}^n a_i|^2\geq 0. $$
By Proposition \ref{posdia}, we can conclude that:

\begin{prop}
If positive semi-definite matrix $A\in M_n(\mathbb{C})$ has $\operatorname{rank}(A)=1$, then $A$ satisfies the permanent dominant conjecture. 
\end{prop}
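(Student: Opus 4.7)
The plan is to reduce the claim to Proposition \ref{posdia} by showing that every diagonal product $l_\sigma(A)$ is a non-negative real number when $A$ has rank one. First I would invoke the spectral theorem for positive semi-definite matrices: a rank-one PSD matrix $A \in M_n(\mathbb{C})$ can be written as $A = vv^*$ for some vector $v = (a_1, a_2, \dots, a_n)^t \in \mathbb{C}^n$. This gives the explicit formula $A_{ij} = a_i \bar{a}_j$ for every entry.

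Next, for an arbitrary $\sigma \in S_n$ I would compute $l_\sigma(A) = \prod_{i=1}^n A_{i\sigma(i)} = \prod_{i=1}^n a_i \bar{a}_{\sigma(i)}$. The crucial step is to separate this into $\bigl(\prod_i a_i\bigr)\bigl(\prod_i \bar{a}_{\sigma(i)}\bigr)$ and then use the fact that $\sigma$ is a bijection of $[n]$ to rewrite the second factor as $\prod_i \bar{a}_i = \overline{\prod_i a_i}$. Hence $l_\sigma(A) = \bigl|\prod_{i=1}^n a_i\bigr|^2 \geq 0$, uniformly in $\sigma$.

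Once positivity of all $l_\sigma(A)$ is established, Proposition \ref{posdia} directly delivers $\bar{d}^G_\chi(A) \leq \operatorname{per}(A)$ for every subgroup $G \leq S_n$ and every character $\chi$ of $G$, which is exactly the permanent dominant conjecture for $A$.

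There is no real obstacle here; the only subtle point is recognizing that the permutation invariance of the product $\prod_i a_{\sigma(i)}$ over a full index set collapses the $\sigma$-dependence, turning $l_\sigma(A)$ into a squared modulus independent of $\sigma$. Once that observation is made, the result is immediate from the earlier proposition.
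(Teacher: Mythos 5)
Your argument is correct and is essentially identical to the paper's own proof: both write the rank-one positive semi-definite matrix as $A=vv^*$, use the bijectivity of $\sigma$ to collapse $l_\sigma(A)$ into $|\prod_{i=1}^n a_i|^2\geq 0$, and then invoke Proposition \ref{posdia}. No differences worth noting.
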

Moreover, the above conclusion can be extended to any sum of rank one matrix and non-negative diagonal real matrix as the following proposition. 

\begin{prop}
	If  $A=vv^*+\operatorname{diag}(d_1,d_2,\dots,d_n)$, where $v\in \mathbb{C}^n$ and $d_i\geq 0$ for all $i$, then $A$ satisfies the permanent dominant conjecture. 
\end{prop}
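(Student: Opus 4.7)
The plan is to reduce to Proposition \ref{posdia}: I will show that $l_\sigma(A) \geq 0$ for every $\sigma \in S_n$, which by that proposition suffices. Write $v = (a_1,\ldots,a_n)^t$ so that the entries of $A$ are
$$A_{ii} = |a_i|^2 + d_i, \qquad A_{ij} = a_i \bar{a}_j \ \text{ for } i \neq j.$$
Note that $A$ is positive semi-definite as a sum of two positive semi-definite matrices.

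First I would split the product $l_\sigma(A) = \prod_{i=1}^n A_{i\sigma(i)}$ according to the fixed points of $\sigma$. Let $F(\sigma) = \{i : \sigma(i) = i\}$. Then
$$ l_\sigma(A) = \Bigl(\prod_{i \in F(\sigma)} (|a_i|^2 + d_i)\Bigr) \Bigl(\prod_{i \notin F(\sigma)} a_i \bar{a}_{\sigma(i)}\Bigr). $$
The first factor is a product of non-negative reals, hence non-negative. For the second factor, I would decompose the restriction of $\sigma$ to the non-fixed points into its non-trivial cycles. For any such cycle $(i_1,i_2,\ldots,i_k)$ (with $\sigma(i_j)=i_{j+1}$, indices mod $k$), the corresponding contribution telescopes as
$$ \prod_{j=1}^k a_{i_j}\bar{a}_{i_{j+1}} = \Bigl(\prod_{j=1}^k a_{i_j}\Bigr)\overline{\Bigl(\prod_{j=1}^k a_{i_j}\Bigr)} = \Bigl|\prod_{j=1}^k a_{i_j}\Bigr|^2 \geq 0. $$
Multiplying these cycle contributions, the second factor is also non-negative, and hence $l_\sigma(A) \geq 0$.

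Once the inequality $l_\sigma(A) \geq 0$ for all $\sigma \in S_n$ is established, Proposition \ref{posdia} immediately yields $\bar{d}^G_\chi(A) \leq \operatorname{per}(A)$ for every character $\chi$ of every subgroup $G \leq S_n$, completing the proof. The main (modest) obstacle is seeing how to handle the mixed cross-terms arising because $A$ is not rank one: this is resolved by the cycle decomposition argument above, which reduces everything to the rank-one case on each cycle, just as in the preceding proposition, while the fixed-point part absorbs the diagonal perturbation $d_i$.
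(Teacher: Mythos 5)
Your proof is correct and follows essentially the same route as the paper: both reduce to Proposition \ref{posdia} by showing $l_\sigma(A)\geq 0$ for every $\sigma\in S_n$. The only (cosmetic) difference is that the paper expands $\prod_i(a_i\bar a_{\sigma(i)}+\delta_{i\sigma(i)}d_i)$ as a sum over subsets $I\subseteq[n]$ with $\sigma(I)=I$, whereas you factor the product directly over fixed and non-fixed points and handle the latter by cycles; both computations verify the same nonnegativity.
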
 
\begin{proof}
	Let $v=(a_1,a_2,\dots,a_n)^t$ and $\sigma\in S_n$. Then,
	$$l_\sigma(A)=\prod_{i=1}^n (a_i\bar{a}_{\sigma(i)}+\delta_{i\sigma(i)}d_i)=\sum_{I\subseteq [n]}(\prod_{i\in I}a_i\bar{a}_{\sigma(i)})(\prod_{i\notin I}\delta_{i\sigma(i)}d_i).  $$
	The non-zero summand exists when the index set $I$ satisfies $\sigma(i)=i$ for all $i\notin I$; denote the family of all such index sets by $N^c$.  Hence, $\sigma(I)=I$ for all $I\in N^C$ and thus
	$$l_\sigma(A)= \sum_{I\in N^c}(\prod_{i\in I}a_i\bar{a}_{\sigma(i)})(\prod_{i\notin I}d_i)=\sum_{I\in N^c}(\prod_{i\in I}a_i)(\overline{\prod_{i\in I}a_i})(\prod_{i\notin I}d_i)=\sum_{I\in N^c}|\prod_{i\in I}a_i)|^2(\prod_{i\notin I}d_i)\geq 0. $$
	By Proposition \ref{posdia}, the proof is completed.
\end{proof}

\section{Classes of matrices affirming the conjecture}
Throughout this section, let $G$ be a finite group embedded in the full symmetric group $S_n$ and let $\chi$ be an irreducible character (need not be linear) of $G$.  We first recall the Cauchy-Binet Theorem for generalized matrix function: 
\begin{thm}[\cite{MerrisB}, Theorem 7.34]\label{thm1}
  If $\alpha, \beta\in \Omega:=\{\gamma\in \Gamma_{m,n}\;|\;(\chi,1)_{G_\gamma}\neq 0 \}$, then
	$$ d^G_\chi((AB)[\alpha|\beta]) =\frac{\chi(e)}{|G|}\sum_{\gamma\in \Omega}d^G_\chi(A[\alpha|\gamma])d^G_\chi(B[\gamma|\beta]),$$
	for any $A,B\in M_n(\mathbb{C})$, where $\Gamma_{m,n}=\{\gamma:= (\gamma_1,\gamma_2,\dots,\gamma_m)\in \mathbb{N}^m\;|\; 1\leq \gamma_i\leq n\}$, $G_\gamma=\{g\in G\;|\; g\cdot\gamma:=(\gamma_{g(1)},\gamma_{g(2)},\dots,\gamma_{g(m)})=\gamma \}$, $(\chi,1)_{G_\gamma}=\frac{1}{|G_\gamma|}\sum_{g\in G_\gamma}\chi(g)$ and $AB[\alpha|\beta]$ is the matrix obtained from $AB$ in which  the rows and columns are indexed by $\alpha$ and $\beta$, respectively.
\end{thm}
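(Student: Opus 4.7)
The plan is to prove this generalized Cauchy--Binet identity via the theory of symmetry classes of tensors. Let $V=\mathbb{C}^n$ carry the natural place-permutation action $P(g)$ of $G$ on $\bigotimes^m V$, and let $T_\chi := (\chi(e)/|G|)\sum_{g\in G}\chi(g^{-1})P(g)$ be the associated symmetrizer; its image $V_\chi^m(V)$ is the symmetry class of tensors. Setting $e_\alpha^\ast := T_\chi(e_{\alpha_1}\otimes\cdots\otimes e_{\alpha_m})$ for $\alpha\in\Gamma_{m,n}$, one has the standard facts that the $e_\alpha^\ast$ span $V_\chi^m(V)$, that $e_\alpha^\ast\neq 0$ iff $(\chi,1)_{G_\alpha}\neq 0$ (i.e.\ $\alpha\in\Omega$), and the fundamental inner-product identity
$$\langle e_\alpha^\ast, e_\beta^\ast\rangle = \frac{\chi(e)}{|G|}\,d^G_\chi(E[\alpha|\beta])$$
with $E$ the Gram matrix of the standard basis, which realizes $d^G_\chi$ as a Gram form on $V_\chi^m(V)$.

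Using the induced operator $K(A)$ on $V_\chi^m(V)$ given by $K(A)e_\alpha^\ast = (Ae_{\alpha_1})\ast\cdots\ast(Ae_{\alpha_m})$, I would first invoke the two key properties $K(AB)=K(A)K(B)$ and the representation
$$d^G_\chi(A[\alpha|\beta]) = \frac{|G|}{\chi(e)}\,\langle e_\alpha^\ast,\, K(A)e_\beta^\ast\rangle$$
(up to the usual conjugation conventions). Applied to the product, multiplicativity immediately yields $d^G_\chi((AB)[\alpha|\beta]) = (|G|/\chi(e))\,\langle e_\alpha^\ast, K(A)K(B)e_\beta^\ast\rangle$.

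The next step is to insert an identity operator on $V_\chi^m(V)$ between $K(A)$ and $K(B)$, expressed as a weighted sum of rank-one operators built from the generating family $\{e_\gamma^\ast:\gamma\in\Omega\}$. Because the $G$-orbit of $\gamma$ produces $|G|/|G_\gamma|$ mutually proportional symmetrized tensors, appropriate orbit-stabilizer weighting collapses this resolution to one representative per orbit and, combined with the $\chi(e)/|G|$ factor already inside $T_\chi$, yields exactly the prefactor and index set in the statement. Reading off both remaining inner products via the Gram identity then delivers the right-hand side $(\chi(e)/|G|)\sum_{\gamma\in\Omega}d^G_\chi(A[\alpha|\gamma])\,d^G_\chi(B[\gamma|\beta])$.

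The main obstacle will be calibrating this resolution of identity: the family $\{e_\gamma^\ast\}$ is neither orthogonal nor linearly independent, so the correct weights must be extracted from a careful combination of orbit-stabilizer counting with the projection normalization in $T_\chi$. As a backup I would pursue a purely combinatorial version: expand the left-hand side coordinate-wise, swap summations to obtain
$$d^G_\chi((AB)[\alpha|\beta]) = \sum_{k\in\Gamma_{m,n}}\Bigl(\prod_{i=1}^m A_{\alpha_i,k_i}\Bigr)\,d^G_\chi(B[k|\beta]),$$
partition $\Gamma_{m,n}$ into $G$-orbits, and reduce each orbit sum using $\sum_{h\in G_\gamma}\chi(h)=|G_\gamma|(\chi,1)_{G_\gamma}$; this automatically kills contributions from $\gamma\notin\Omega$ and reintroduces the factor $\chi(e)/|G|$ from orbit-stabilizer counting, recovering the desired identity by a direct route that sidesteps the tensor machinery.
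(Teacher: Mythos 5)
The paper itself gives no proof of this statement: it is quoted verbatim from Merris's book (Theorem 7.34 there), so there is no internal argument to compare against. Your primary route via symmetry classes of tensors is exactly the standard textbook proof, and it does work; the only step of your plan that would fail as written is the ``collapse to one representative per orbit.'' No collapsing is wanted: the identity to be proved sums over \emph{all} of $\Omega$, and the correct resolution of identity is simply
$$ u \;=\; \sum_{\gamma\in\Gamma_{m,n}} \langle u, e_\gamma^\ast\rangle\, e_\gamma^\ast \qquad (u\in V_\chi^m(V)),$$
with \emph{unit} weights, the terms with $\gamma\notin\Omega$ dropping out because $e_\gamma^\ast=0$ there. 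This is immediate once you observe that $T_\chi$ is a self-adjoint idempotent (self-adjointness from $\overline{\chi(g)}=\chi(g^{-1})$ and unitarity of $P(g)$; idempotency from the convolution identity $\sum_{k}\chi(ak)\chi(k^{-1}b)=\frac{|G|}{\chi(e)}\chi(ab)$, which is where irreducibility of $\chi$ is genuinely used), so $\{e_\gamma^\ast\}_{\gamma\in\Omega}$ is a Parseval frame for $V_\chi^m(V)$ even though it is neither orthogonal nor independent. Your ``main obstacle'' therefore dissolves; inserting this resolution between $K(A)$ and $K(B)$ and reading off the Gram identities gives the theorem directly, with the single prefactor $\chi(e)/|G|$ surviving after cancellation.

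Your backup combinatorial route is also viable but needs two ingredients you do not name. First, after expanding to $\sum_{k}\bigl(\prod_i A_{\alpha_i k_i}\bigr)d^G_\chi(B[k|\beta])$ and grouping by orbits, matching this with the right-hand side requires the same generalized orthogonality relation $\sum_{k\in G}\chi(ak)\chi(k^{-1}b)=\frac{|G|}{\chi(e)}\chi(ab)$; that, not orbit--stabilizer counting, is where the factor $\chi(e)/|G|$ comes from, and it again uses irreducibility. Second, to kill the orbits with $\gamma\notin\Omega$ you need more than $\sum_{s\in G_\gamma}\chi(s)=0$: you need $\sum_{s\in G_\gamma}\chi(asb)=0$ for all $a,b\in G$. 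This does follow, because $\frac{1}{|G_\gamma|}\sum_{s\in G_\gamma}\rho(s)$ is an idempotent whose trace is $(\chi,1)_{G_\gamma}$, hence it is the zero operator when that inner product vanishes --- but the step should be stated, since the scalar identity alone does not suffice.
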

Let $A$ be a positive semi-definite matrix in $M_n(\mathbb{C})$.  Then, by the Cholesky decomposition, there exits a lower triangular matrix $L$ such that $A=LL^*$.  We observe that, for $\gamma, (n):=(1,2,\dots,n) \in \Gamma_{n,n}$,
\begin{eqnarray*}
d^G_\chi(L^*[\gamma|(n)])&=&\sum_{\sigma\in G}\chi(\sigma)\prod_{j=1}^n(L^*)_{\gamma_j\sigma(j)} \\
&=&\sum_{\sigma\in G}\chi(\sigma)\overline{\prod_{j=1}^n(L^t)_{\gamma_j\sigma(j)}} \\
&=&\overline{\sum_{\sigma\in G}\overline{\chi(\sigma)}\prod_{j=1}^n(L^t)_{\gamma_j\sigma(j)}} \\
&=&\overline{\sum_{\sigma\in G}\chi(\sigma^{-1})\prod_{j=1}^nL_{\sigma(j)\gamma_j} }\\
&=&\overline{\sum_{\sigma\in G}\chi(\sigma^{-1})\prod_{j=1}^nL_{i\gamma_{\sigma^{-1}(i)}} }\\
&=&\overline{\sum_{\theta\in G}\chi(\theta)\prod_{j=1}^nL_{i\gamma_{\theta(i))}}}=\overline{d^G_\chi(L[(n)|\gamma])}.
\end{eqnarray*}
By Theorem \ref{thm1} with $G\leq S_n$ and $\alpha=\beta=(n)$, we now compute that
$$ d^G_\chi(A) =\frac{\chi(e)}{|G|}\sum_{\gamma\in \Omega}d^G_\chi(L[(n)|\gamma])d^G_\chi(L^*[\gamma|(n)])=\frac{\chi(e)}{|G|}\sum_{\gamma\in \Omega}|d^G_\chi(L[(n)|\gamma])|^2;$$
namely,
\begin{equation}\label{eq1}
\bar{d}^G_\chi(A)=\frac{1}{|G|}\sum_{\gamma\in \Omega}|d^G_\chi(L[(n)|\gamma])|^2.
\end{equation}

Note also that, for any $g\in S_n$, $L[(n)|g\cdot \gamma]=L[(n)|\gamma]P_g$, where $P_g=(\delta_{ig(j)})$ is the permutation matrix associated to $g\in S_n$.  Then, by setting $L^\omega:=L[(n)|\omega]$ (for $\omega\in \Gamma_{n,n}$), we have that $$d^G_\chi(L^{g\cdot \gamma})=d^G_\chi(L^\gamma P_g)$$ for any $g\in G$, where $g\cdot \gamma:=(\gamma_{g(1)}, \gamma_{g(2)},\dots,\gamma_{g(n)})$ (the standard action of $G$ on $\Gamma_{n,n}$).  Let $\Delta_G$ be the set of all representatives (first member of each orbit ordered by lexicographic ordering) of the action of $G$ on $\Gamma_{n,n}$ and let $\bar{\Delta}_G=\Delta_G\cap \Omega$.  For each $\gamma\in \bar{\Delta}_G$  denote the set of all representatives of  left cosets of $G_\gamma$ in $G$ by   $S_\gamma$. Then, for each orbital $[\gamma]$,
\begin{equation} \label{eq21}
\sum_{\omega\in [\gamma] }|d^G_\chi(L^{\omega})|^2=\sum_{g\in S_\gamma}|d^G_\chi(L^{\gamma} P_g)|^2.
\end{equation}
   According to $\Omega=\cup_{\gamma\in \bar{\Delta}_G}[\gamma]$ (Lemma 6.22 in \cite{MerrisB}), the relation (\ref{eq1}) becomes
\begin{equation}\label{eq2}
\bar{d}^G_\chi(A)=\sum_{\gamma\in \bar{\Delta}_G}\frac{1}{|G|}\sum_{g\in S_\gamma}|d^G_\chi(L^{\gamma} P_g)|^2.
\end{equation}

Now, we consider the sequence $\gamma^\circ:=(n)$ which clearly belongs to $\bar{\Delta}_G$ and $S_{\gamma^\circ}=G$.  Since $L$ is a lower triangular matrix, 
\begin{eqnarray*}
\frac{1}{|G|}\sum_{g\in S_{\gamma^\circ}}|d^G_\chi(L^{\gamma^\circ} P_g)|^2 
&=&\frac{1}{|G|}\sum_{g\in G}|d^G_\chi(L P_g)|^2 \\ 
&=&\frac{1}{|G|}\sum_{g\in G}| \sum_{h\in G}\chi(h)\prod_{i=1}^{n}(LP_g)_{ih(i)}|^2 \\ 
&=&\frac{1}{|G|}\sum_{g\in G}| \sum_{h\in G}\chi(h)\prod_{i=1}^{n}L_{igh(i)}|^2 \\ 
&=&\frac{1}{|G|}\sum_{g\in G}|\sum_{\theta\in G}\chi(g^{-1}\theta)\prod_{i=1}^{n}L_{i\theta(i)}|^2 \\
&=&\frac{1}{|G|}\sum_{g\in G}|\chi(g^{-1})\prod_{i=1}^{n}L_{ii}|^2 \\
&=&\frac{1}{|G|}\sum_{g\in G}|\chi(g^{-1})|^2|\prod_{i=1}^{n}L_{ii}|^2 \\
&=&\frac{1}{|G|}\sum_{g\in G}|\chi(g^{-1})|^2(\det(L)\det(L^*)) \\
&=&(\frac{1}{|G|}\sum_{g\in G}|\chi(g)|^2)\det(A).
\end{eqnarray*}
Since $\chi$ is an irreducible character, $(\chi,\chi)_G=1$ which means that $$1=(\chi,\chi)_G:=\frac{1}{|G|}\sum_{g\in G}\overline{\chi(g)}\chi(g)=\frac{1}{|G|}\sum_{g\in G}|\chi(g)|^2. $$ So, 
$$ \frac{1}{|G|}\sum_{g\in S_{\gamma^\circ}}|d^G_\chi(L^{\gamma^\circ} P_g)|^2 =\frac{1}{|G|}\sum_{g\in G}|d^G_\chi(L P_g)|^2 =\det(A).$$
Moreover, $\sigma\cdot \gamma^\circ$ for $\sigma\in S_G:=\{\sigma_1,\dots,\sigma_t\}$, the set of all representatives for the right cosets of $S_n$ by $G$,  are in different orbits and all entries of the sequence $\sigma\cdot \gamma^\circ$ are distinct.  If $\sigma\in S_G\setminus G$, then
\begin{eqnarray*}
	\frac{1}{|G|}\sum_{g\in S_{\sigma\cdot\gamma^\circ}}|d^G_\chi(L^{\sigma\cdot\gamma^\circ} P_g)|^2 
	&=&\frac{1}{|G|}\sum_{g\in S_{\sigma\cdot\gamma^\circ}}|d^G_\chi(L P_{\sigma g})|^2 \\ 
	&=&\frac{1}{|G|}\sum_{g\in S_{\sigma\cdot\gamma^\circ}}|  \sum_{h\in G}\chi(h)\prod_{i=1}^{n}(L P_{\sigma g})_{ih(i)}|^2\\
	&=&\frac{1}{|G|}\sum_{g\in S_{\sigma\cdot\gamma^\circ}}|\sum_{h\in G}\chi(h)\prod_{i=1}^{n}L_{i\sigma gh(i)}|^2 \\
	&=&0,
\end{eqnarray*}
because $\sigma g h\neq e$ for any $\sigma\notin G$ and $g,h\in G$.

Next, we consider the sequence $\gamma^k:=(k,k,\dots,k)$, for each $1\leq k\leq n$.  This sequence belongs to $\bar{\Delta}_G$ if and only if $\chi=1$.  Since $S_{\gamma^k}=\{e\}$, we get that
\begin{eqnarray*}
	\frac{1}{|G|}\sum_{g\in S_{\gamma^k}}|d^G_1(L^{\gamma^k} P_g)|^2 
	&=&\frac{1}{|G|}|d^G_1(L[(1,2,\dots,n),(k,k,\dots,k)])|^2\\ 
		&=&\frac{1}{|G|}|\sum_{g\in G} \prod_{i=1}^n L_{ig(i)}|^2 \\
	&=&\frac{1}{|G|}|\sum_{g\in G} \prod_{i=1}^n L_{ik}|^2 \\
	&=&\frac{1}{|G|}(|G| |\prod_{i=1}^n L_{ik}|)^2\\
	&=&\begin{cases}
		|G|\prod_{i=1}^n |L_{i1}|^2, & \text{if $k=1$ }\\
		0, & \text{otherwise.}
	\end{cases}
\end{eqnarray*}

Hence, by setting $$\bar{\Delta}^*_G=\bar{\Delta}_G-\{\sigma\cdot \gamma^\circ\;|\; \sigma\in S_G\}-\{\gamma^k\;|\; k=1,2,\dots,n\},$$
and using the above discussion together with relation (\ref{eq2}), we can conclude that:
\begin{thm}\label{maintheorem}
	Let $G$ be a finite subgroup of $S_n$ and $\chi$ be an irreducible character of $G$.  Let $A=LL^*$ be a positive semi-definite matrix in $M_n(\mathbb{C})$, where $L=(L_{ij})$ is a lower triangular matrix  from a Cholesky decomposition of $A$.  Then 
	$$  \bar{d}^G_\chi(A)=\det(A)+\delta_{\chi,1}	|G|\prod_{i=1}^n |L_{i1}|^2+\sum_{\gamma\in \bar{\Delta}^*_G}\frac{1}{|G|}\sum_{g\in S_\gamma}|d^G_\chi(L^{\gamma} P_g)|^2, $$
	where $\delta_{\chi,1}$ is the Kronecker delta function.
\end{thm}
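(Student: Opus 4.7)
The plan is to decompose the sum in identity \eqref{eq2} according to the three distinguished classes of orbit representatives already analyzed in the paragraphs preceding the statement: the orbits of distinct-entry sequences $\sigma\cdot\gamma^\circ$ with $\gamma^\circ=(1,2,\ldots,n)$ and $\sigma\in S_G$, the orbits of constant sequences $\gamma^k=(k,k,\ldots,k)$, and everything else, which by definition is $\bar{\Delta}^*_G$. A key preliminary observation is that the inner block $\frac{1}{|G|}\sum_{g\in S_\gamma}|d^G_\chi(L^\gamma P_g)|^2$ is an invariant of the $G$-orbit of $\gamma$ (this is essentially \eqref{eq21}), so the lex-minimal representatives implicitly used in $\bar{\Delta}_G$ may be replaced, orbit by orbit, with the specific representatives $\sigma\cdot\gamma^\circ$ and $\gamma^k$ appearing in the excerpt.

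First I would verify membership in $\bar{\Delta}_G=\Delta_G\cap\Omega$ for each class. Since each $\sigma\cdot\gamma^\circ$ has distinct entries, its stabilizer satisfies $G_{\sigma\cdot\gamma^\circ}=\{e\}$ and hence $(\chi,1)_{G_{\sigma\cdot\gamma^\circ}}=\chi(e)\ne 0$, so every distinct-entry orbit lies in $\Omega$. For the constant sequences, $G_{\gamma^k}=G$, so by orthogonality of irreducible characters $(\chi,1)_G=\delta_{\chi,1}$; hence $\gamma^k\in\bar{\Delta}_G$ precisely when $\chi=1$, and for $\chi\ne 1$ the removal of $\{\gamma^k\}$ in the definition of $\bar{\Delta}^*_G$ is vacuous, which is exactly what the factor $\delta_{\chi,1}$ in the final formula accounts for. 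Assuming $n\ge 2$, the distinct-entry and constant classes are clearly disjoint.

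Next I collect the contributions to \eqref{eq2} from each class. The distinct-entry class contributes $\det(A)$ in total: the orbit of $\gamma^\circ$ itself gives $\det(A)$ by the computation in the excerpt, which invokes $(\chi,\chi)_G=1$ (irreducibility of $\chi$) together with the lower-triangular identity $\prod_i L_{ii}=\det L$; and each remaining orbit, represented by $\sigma\cdot\gamma^\circ$ with $\sigma\in S_G\setminus G$, contributes $0$ because $\sigma gh\ne e$ for all $g,h\in G$. The constant class contributes $\delta_{\chi,1}|G|\prod_{i=1}^n|L_{i1}|^2$, since when $\chi=1$ only $k=1$ gives a nonzero value, and when $\chi\ne 1$ no such orbits exist. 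The remaining orbits form the residual sum over $\bar{\Delta}^*_G$, left in raw form.

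The proof is essentially this bookkeeping, so there is no substantive obstacle. The point worth flagging is the consistency of the decomposition across the two different systems of orbit representatives used above — the lex-minimal reps implicit in $\bar{\Delta}_G$ versus the specific reps $\sigma\cdot\gamma^\circ$ and $\gamma^k$ — which is guaranteed by the orbit invariance of the inner block. Once that is granted, the three contributions reassemble into the stated identity by direct addition.
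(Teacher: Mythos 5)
Your proposal is correct and follows essentially the same route as the paper: starting from identity (\ref{eq2}), isolating the distinct-entry orbits (which contribute $\det(A)$ via $(\chi,\chi)_G=1$ and vanish off the coset of $G$) and the constant sequences (which contribute $\delta_{\chi,1}|G|\prod_i|L_{i1}|^2$), and leaving the rest as the sum over $\bar{\Delta}^*_G$. Your added justifications — the stabilizer computations showing $(\chi,1)_{G_{\sigma\cdot\gamma^\circ}}=\chi(e)\neq 0$ and $(\chi,1)_{G_{\gamma^k}}=\delta_{\chi,1}$, and the orbit-invariance of the inner block permitting the change of representatives — are correct and merely make explicit what the paper leaves implicit.
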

This result immediately implies that:
\begin{cor} [Schur inequality]
	Let $\chi$ be an irreducible character of a finite subgroup $G$ of $S_n$.  Then     $$ \bar{d}^G_\chi(A)\geq \det(A), $$
	for any  positive semi-definite matrix $A\in M_n(\mathbb{C})$.
\end{cor}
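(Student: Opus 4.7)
The plan is to read off the Schur inequality directly from the identity established in Theorem \ref{maintheorem}. Since $A$ is positive semi-definite, it admits a Cholesky factorization $A = LL^*$ with $L$ lower triangular, so the identity
$$\bar{d}^G_\chi(A)=\det(A)+\delta_{\chi,1}|G|\prod_{i=1}^n |L_{i1}|^2+\sum_{\gamma\in \bar{\Delta}^*_G}\frac{1}{|G|}\sum_{g\in S_\gamma}|d^G_\chi(L^{\gamma} P_g)|^2$$
applies verbatim to $A$.

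The strategy is then simply to verify that the two correction terms on the right-hand side are non-negative. First, $\delta_{\chi,1}|G|\prod_{i=1}^n |L_{i1}|^2$ is a non-negative integer multiple of a product of squared moduli, hence $\geq 0$ regardless of whether $\chi = 1$. Second, for each $\gamma \in \bar{\Delta}^*_G$ the inner sum $\frac{1}{|G|}\sum_{g\in S_\gamma}|d^G_\chi(L^{\gamma} P_g)|^2$ is a non-negative real number because it is an average of squared absolute values of complex numbers. Summing over $\gamma \in \bar{\Delta}^*_G$ preserves non-negativity.

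Combining these two observations, the identity rearranges to
$$\bar{d}^G_\chi(A)-\det(A) = \delta_{\chi,1}|G|\prod_{i=1}^n |L_{i1}|^2+\sum_{\gamma\in \bar{\Delta}^*_G}\frac{1}{|G|}\sum_{g\in S_\gamma}|d^G_\chi(L^{\gamma} P_g)|^2 \geq 0,$$
which is precisely $\bar{d}^G_\chi(A)\geq \det(A)$. There is essentially no obstacle to overcome here: all of the genuine content — the Cauchy–Binet expansion, the clean evaluation of the $\gamma^\circ$-orbit contribution to $\det(A)$, and the irreducibility-based normalization $(\chi,\chi)_G = 1$ — has already been absorbed into Theorem \ref{maintheorem}, and the corollary is obtained by simply discarding the non-negative remainder terms.
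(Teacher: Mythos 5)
Your proof is correct and matches the paper exactly: the paper presents this corollary as an immediate consequence of Theorem \ref{maintheorem}, obtained by observing that the two remaining terms in the identity are sums of non-negative quantities. Nothing further is needed.
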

In particular, when $\chi$ is linear, it turns out that $$	d^G_\chi(L^\gamma P_g)=\chi(g^{-1})\sum_{\theta\in G}\chi(\theta)\prod_{i=1}^{n}L^\gamma_{i\theta(i)}=\chi(g^{-1})d^G_\chi(L^\gamma).$$
Since $|\chi(g)|=1$ and $\chi(gh)=\chi(g)\chi(h)$ for all $g,h\in G$ (because $\chi$ is linear), $$  \sum_{g\in S_\gamma}|d^G_\chi(L^{\gamma} P_g)|^2 =\sum_{g\in S_\gamma}|\chi(g^{-1})d^G_\chi(L^\gamma)|^2=\frac{|G|}{|G_\gamma|}|d^G_\chi(L^\gamma)|^2 .$$ By using Theorem \ref{maintheorem} and using the same notations as above, we have that: 
\begin{cor} 
	Let $\chi$ be a linear character of a finite subgroup $G$ of $S_n$.  Then     $$ \bar{d}^G_\chi(A)=\det(A)+\delta_{\chi,1}	|G|\prod_{i=1}^n |L_{i1}|^2+\sum_{\gamma\in \bar{\Delta}^*_G}\frac{1}{|G_\gamma|}|d^G_\chi(L^{\gamma} )|^2.$$
	In particular, 
	$$\operatorname{per}(A)=\det(A)+n!\prod_{i=1}^n |L_{i1}|^2+(n-1)!\prod_{i=1}^{n-1} |L_{i1}|^2|L_{nn}|^2+\sum_{\gamma\in \bar{\Delta}^{**}_{S_n}}\frac{1}{|(S_n)_\gamma|}|\operatorname{per}(L^{\gamma} )|^2,$$
	where $\bar{\Delta}^{**}_{S_n}=\bar{\Delta}^*_{S_n}-\{(1,1,\dots,1,n)\}$.
\end{cor}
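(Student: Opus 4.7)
The plan is to prove the two formulas in the corollary by first reducing the generic sum $\sum_g |d^G_\chi(L^\gamma P_g)|^2$ in Theorem \ref{maintheorem} to a single term using the multiplicativity of a linear character, and then specialising to $G=S_n$, $\chi=1$, isolating the single extra orbit representative $(1,1,\dots,1,n)$ from $\bar{\Delta}^*_{S_n}$.

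For the first (linear) identity, I would start from the displayed formula
$(LP_g)_{ij}=L_{ig(j)}$ and the analogous $L^\gamma P_g$ identity, and compute
$$d^G_\chi(L^\gamma P_g)=\sum_{\theta\in G}\chi(\theta)\prod_{i=1}^n L^\gamma_{i\,g\theta(i)}=\chi(g^{-1})\sum_{\eta\in G}\chi(\eta)\prod_{i=1}^n L^\gamma_{i\eta(i)}=\chi(g^{-1})\,d^G_\chi(L^\gamma),$$
where the middle equality uses the substitution $\eta=g\theta$ together with $\chi(g^{-1}\eta)=\chi(g^{-1})\chi(\eta)$, valid because $\chi$ is linear. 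Since $|\chi(g^{-1})|=1$ and $|S_\gamma|=|G|/|G_\gamma|$, inserting this into Theorem \ref{maintheorem} collapses each inner sum to $(|G|/|G_\gamma|)|d^G_\chi(L^\gamma)|^2$, which after dividing by $|G|$ yields the first formula.

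For the permanent identity, I would take $G=S_n$ and $\chi=1$, noting that then $\delta_{\chi,1}=1$, $|G|=n!$, and the right coset representatives $S_G$ reduce to $\{e\}$ so that the only orbit removed in forming $\bar{\Delta}^*_{S_n}$ from $\bar{\Delta}_{S_n}$ (apart from the constant sequences $\gamma^k$) is the one of $\gamma^\circ=(1,2,\dots,n)$. The only remaining task is to extract the sequence $\gamma=(1,1,\dots,1,n)$, which is non-decreasing and non-constant, hence lies in $\bar{\Delta}^*_{S_n}$. Since $L$ is lower triangular, the matrix $L^\gamma$ has its first $n-1$ columns equal to the first column of $L$, and its last column equal to $(0,\dots,0,L_{nn})^t$. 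Expanding $\operatorname{per}(L^\gamma)$ along this last column forces $\sigma(n)=n$ for every surviving permutation, yielding $\operatorname{per}(L^\gamma)=L_{nn}(n-1)!\prod_{i=1}^{n-1}L_{i1}$. The stabiliser $(S_n)_\gamma$ consists of permutations fixing the position $n$, so $|(S_n)_\gamma|=(n-1)!$, and the contribution of this orbit in the first formula is exactly $(n-1)!\prod_{i=1}^{n-1}|L_{i1}|^2|L_{nn}|^2$, matching the third term of the claimed identity. The remaining summands, being indexed by $\bar{\Delta}^{**}_{S_n}=\bar{\Delta}^*_{S_n}\setminus\{(1,1,\dots,1,n)\}$, constitute the final sum.

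The main obstacle is not analytical but combinatorial-notational: one has to verify that $(1,1,\dots,1,n)$ is genuinely in $\bar{\Delta}^*_{S_n}$ (i.e. it is the lex-minimal representative of its $S_n$-orbit and is neither $\gamma^\circ$ nor a constant $\gamma^k$), and to correctly identify its stabiliser order $(n-1)!$ and the factor $(n-1)!\prod_{i=1}^{n-1}L_{i1}$ coming from expanding the permanent of the column-repeated matrix $L^\gamma$ along the zero-padded $n$-th column. Once these pieces are in place, the formula follows by direct substitution into the first identity of the corollary.
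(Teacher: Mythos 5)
Your proposal is correct and follows essentially the same route as the paper: the multiplicativity of the linear character collapses each inner sum to $\tfrac{|G|}{|G_\gamma|}|d^G_\chi(L^\gamma)|^2$, and the permanent identity is obtained by specialising to $G=S_n$, $\chi=1$ and evaluating the term for $\gamma=(1,1,\dots,1,n)$ directly, exactly as in the paper's (very terse) proof. Your computation of $\operatorname{per}(L^\gamma)=(n-1)!\,L_{nn}\prod_{i=1}^{n-1}L_{i1}$ and of the stabiliser order $(n-1)!$ supplies the detail the paper omits.
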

\begin{proof}
	The term $(n-1)!\prod_{i=1}^{n-1} |L_{i1}|^2|L_{nn}|^2$ comes from the evaluating $\frac{1}{|(S_n)_\gamma|}|\operatorname{per}(L^{\gamma} )|^2$ with $\gamma=(1,1,\dots,1,n)$ directly.
\end{proof}

In order to bound the value of $\bar{d}^G_\chi(A)$, by Theorem \ref{maintheorem}, it is reasonable to concentrate only on $\gamma\in \bar{\Delta}^*_G$ for which $d^G_\chi(L^\gamma P_g)\neq 0$ for some $g\in S_\gamma$; denote the set of all such sequences by $\hat{\Delta}^*_G$.  We have that:
\begin{lem}\label{lem1}
	By using the same notions as above and $n> 3$, $$|\hat{\Delta}^*_G|\leq M_n:= \begin{cases}
n^n-(n-1)^n-n!-n^{\frac{n-4}{2}}(\frac{n-2}{2})^{{\frac{n+2}{2}}}(\frac{n+4}{2})-1, & \text{if $n$ is even }\\
		n^n-(n-1)^n-n!-n^{\frac{n-5}{2}}(\frac{n-1}{2})^{{\frac{n+1}{2}}}(\frac{n+3}{2})-1, & \text{if $n$ is odd.}
	\end{cases}$$
\end{lem}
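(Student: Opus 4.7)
The plan is to derive $M_n$ as a sequence-level upper bound for $|\hat{\Delta}^*_G|$, using the inequality $|\hat{\Delta}^*_G| \leq |U|$, where $U \subseteq \Gamma_{n,n}$ is the union of the underlying orbits of the orbit representatives in $\hat{\Delta}^*_G$; since each orbit contributes at least one sequence to $U$, counting orbit representatives is at most counting sequences in $U$. I would then carve out $U$ by three successive reductions.

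The first reduction shows that every $\gamma \in \hat{\Delta}^*_G$ contains the value $1$. Since $d^G_\chi(L^\gamma P_g)=\sum_{h \in G}\chi(h)\prod_{i=1}^n L_{i,\gamma_{gh(i)}}$, non-vanishing forces at least one summand with $\prod_{i=1}^n L_{i,\gamma_{gh(i)}} \neq 0$, and the lower-triangularity of $L$ then requires $\gamma_{gh(i)} \leq i$ for all $i$; at $i=1$ this yields $\gamma_{gh(1)}=1$. This eliminates the $(n-1)^n$ sequences omitting the value $1$, leaving at most $n^n - (n-1)^n$ candidates.

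The second reduction subtracts the sequences belonging to the orbits that were already removed from $\bar{\Delta}^*_G$. The set $\{\sigma\cdot\gamma^\circ : \sigma \in S_G\}$ consists of representatives whose orbits cover all $n!$ permutation sequences of $(1,2,\ldots,n)$, each of which contains $1$; at the sequence level this subtracts $n!$. Among the constants $\gamma^k$, only $\gamma^1=(1,\ldots,1)$ contains the value $1$ (the others were already discarded by the first reduction), which subtracts one further candidate. This leaves the preliminary bound $n^n - (n-1)^n - n! - 1$.

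The third reduction exhibits an explicit family $\mathcal{F}$ of sequences that pass the previous two filters but still satisfy $d^G_\chi(L^\gamma P_g)=0$ for every admissible $g$. I would take $\mathcal{F}$ to consist of sequences placing too many coordinates into the upper block $B=\{(n+4)/2,\ldots,n\}$ of size $(n-2)/2$ when $n$ is even (respectively the block of size $(n-1)/2$ when $n$ is odd): once $(n+2)/2$ (respectively $(n+1)/2$) coordinates lie in $B$, the sorted statistic satisfies $\gamma^{(n/2)} \geq (n+4)/2 > n/2$ (respectively $\gamma^{((n-1)/2)}>(n-1)/2$), violating the Hall-type condition $\gamma^{(k)} \leq k$ that is needed for any product $\prod_{i=1}^n L_{i,\gamma_{\omega(i)}}$ to be non-zero for any $\omega \in S_n \supseteq G$. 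I would parametrise $\mathcal{F}$ by $(n+4)/2$ (respectively $(n+3)/2$) admissible placements of the entry equal to $1$, by $((n-2)/2)^{(n+2)/2}$ (respectively $((n-1)/2)^{(n+1)/2}$) value assignments in the $(n+2)/2$ (respectively $(n+1)/2$) restricted coordinates, and by $n^{(n-4)/2}$ (respectively $n^{(n-5)/2}$) free assignments in the remaining coordinates, producing $|\mathcal{F}|=X_n$ and hence $|\hat{\Delta}^*_G| \leq M_n$.

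The main obstacle is the bookkeeping in the third reduction: organising the parametrisation so that the three multiplicative factors genuinely produce $X_n$ mutually distinct sequences (no double counting between the restricted, free, and $1$-placement slots), so that no member of $\mathcal{F}$ accidentally coincides with a permutation or with $(1,\ldots,1)$ already removed, and so that every sequence in $\mathcal{F}$ provably violates the Hall inequality needed for non-vanishing. The parity split in $M_n$ simply reflects the differing sizes of the upper block $B$, while the hypothesis $n>3$ is precisely what keeps the exponents $(n-4)/2$ and $(n-5)/2$ non-negative.
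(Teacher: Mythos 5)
Your proposal follows essentially the same route as the paper: bound $|\hat{\Delta}^*_G|$ at the sequence level by discarding the $(n-1)^n$ sequences omitting the value $1$, the $n!$ permutation sequences, the all-ones sequence, and an explicit family with too many entries in an upper block (which forces a violation of the Hall-type condition $\gamma^{(k)}\leq k$ imposed by the lower-triangularity of $L$), with exactly the counts assembled into $M_n$. If anything, your choice of upper block (size $\tfrac{n-2}{2}$ for $n$ even and $\tfrac{n-1}{2}$ for $n$ odd, matching the bases appearing in $M_n$) is the careful version of the paper's sets $C^e$, $C^o$: the paper's $C^o$ as literally written (values in $\{\tfrac{n+1}{2},\dots,n\}$, a block of size $\tfrac{n+1}{2}$) does not by itself force vanishing, and your shifted block is the one that makes the vanishing argument go through.
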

\begin{proof}
	Let $A:=\{\sigma\cdot (1,2,\dots,n)\;|\; \sigma\in S_n\}$ and  $B:=\{\omega:= (\omega_1,\omega_2,\dots,\omega_n)\in \Gamma_{n,n}\;|\; 2\leq \omega_i\leq n, \forall i=1,2,\dots,n\}$.  If $n>3$ is even, we define $C^e$ to be the set of all sequences $\omega:= (\omega_1,\omega_2,\dots,\omega_n)\in \Gamma_{n,n}$ satisfying $ \omega_j=1$ for some  $ j\in [n]$  and there exist $J\subseteq [n]$ with $|J|=\frac{n}{2}+1$ such that $\omega_k\in \{\frac{n}{2}+1, \frac{n}{2}+2,\dots, n\}$ for all $k\in J$.  If $n>3$ is odd, we define $C^o$ to be the set of all sequences $\omega:= (\omega_1,\omega_2,\dots,\omega_n)\in \Gamma_{n,n}$ satisfying $ \omega_j=1$ for some  $ j\in [n]$  and there exist $J\subseteq [n]$ with $|J|=\frac{n+1}{2}$ such that $\omega_k\in \{\frac{n+1}{2}, \frac{n+1}{2}+1,\dots, n\}$ for all $k\in J$.   Since $L$ is a lower triangular matrix, for  any sequence $\gamma$ in $B$ or $C^e$ or  $C^o$, it turns out that $d^G_\chi(L^\gamma P_g)=0$ for all $g\in G$.  By setting $\Omega^*:=\Omega-A-B-C^{\delta(n)}$ ($C^{\delta(n)}=C^e$ if $n$ is even, and $C^{\delta(n)}=C^o$ if $n$ is odd), we have that $\hat{\Delta}^*_G\subseteq \Omega^*$.   By direct calculation, we have $|\Omega|\leq n^n$, $|A|=n!$, $|B|=(n-1)^n$, $|C^e|\geq n^{\frac{n-4}{2}}(\frac{n-2}{2})^{{\frac{n+2}{2}}}(\frac{n+4}{2}) $ and $|C^o|\geq n^{\frac{n-5}{2}}(\frac{n-1}{2})^{{\frac{n+1}{2}}}(\frac{n+3}{2}) $.  Due to the sets $A,B$, $C^{\delta(n)}$, $\{(1,1,\dots,1)\}$ are pairwise disjoint and $(1,1,\dots,1)\notin \hat{\Delta}^*_G$, the result follows.
\end{proof}
For each positive semi-definite matrix $A=LL^*$, denote $\alpha= \prod_{i=1}^n|L_{i1}|$ and $\alpha_0=\max_{1\leq i \leq n}|L_{i1}|$. For each finite group $G\leq S_n$, denote $$  \epsilon(n,G):=\frac{\alpha}{|G|}\sqrt{\frac{2n!}{3M_n}}.  $$   
Let $\bar{\epsilon}(n,G):=\min\{\frac{1}{\alpha_0^{n-1}} \epsilon(n,G),1\}$.  By using the above notations, the following theorem holds:
\begin{thm}\label{main2}
	If $A=LL^*$ with $|L_{kj}|\leq \bar{\epsilon}(n,G)$ for all $k,j$ with $j\neq 1$,  then $$\bar{d}^G_\chi(A)\leq \det(A)+(\delta_{\chi,1}|G|+\frac{2}{3}n!)\prod_{i=1}^n|L_{i1}|^2,  $$
	for any irreducible character $\chi$ of $G\leq S_n$ with $n> 3$.
\end{thm}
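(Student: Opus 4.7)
The plan is to build directly on the identity of Theorem~\ref{maintheorem}, which expresses
\[
\bar{d}^G_\chi(A) = \det(A) + \delta_{\chi,1}|G|\prod_{i=1}^n|L_{i1}|^2 + \sum_{\gamma\in\bar{\Delta}^*_G}\frac{1}{|G|}\sum_{g\in S_\gamma}|d^G_\chi(L^\gamma P_g)|^2.
\]
Setting $\alpha=\prod_i|L_{i1}|$, the theorem is equivalent to showing $\sum_\gamma\frac{1}{|G|}\sum_g|d^G_\chi(L^\gamma P_g)|^2\leq \tfrac{2}{3}n!\,\alpha^2$. Terms with $\gamma\notin\hat{\Delta}^*_G$ vanish, and Lemma~\ref{lem1} bounds $|\hat{\Delta}^*_G|\leq M_n$, so it suffices to establish a uniform per-term estimate $|d^G_\chi(L^\gamma P_g)|^2\leq|G|^2\epsilon(n,G)^2$; the collapsing identity $M_n|G|^2\epsilon(n,G)^2=\tfrac{2}{3}n!\,\alpha^2$ is built into the definition of $\epsilon(n,G)$.

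For the per-term bound, I would expand $d^G_\chi(L^\gamma P_g)=\sum_h\chi(h)\prod_iL_{i,\gamma_{gh(i)}}$ and apply the triangle inequality. Orthogonality of irreducible characters combined with Cauchy--Schwarz yields $\sum_{h\in G}|\chi(h)|\leq|G|$. The exclusions in the definition of $\hat{\Delta}^*_G$ force the number $t=t(\gamma)$ of ones in $\gamma$ to satisfy $1\leq t\leq n-1$: at least one, since $\gamma\notin B$; strictly fewer than $n$, since $\gamma\ne(1,\dots,1)$. Because $\{\gamma_{gh(i)}\}_i$ and $\{\gamma_i\}_i$ coincide as multisets, the product $\prod_i|L_{i,\gamma_{gh(i)}}|$ contains exactly $t$ factors from column $1$ (each $\leq\alpha_0$) and exactly $n-t\geq 1$ factors from columns of index $\ne 1$ (each $\leq\bar{\epsilon}(n,G)$ by hypothesis). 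Collecting, $|d^G_\chi(L^\gamma P_g)|\leq |G|\,\alpha_0^{\,t}\bar{\epsilon}(n,G)^{\,n-t}$.

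The principal analytic step is then to verify $\alpha_0^t\bar{\epsilon}(n,G)^{n-t}\leq\epsilon(n,G)$ uniformly on $t\in[1,n-1]$. The function $t\mapsto\alpha_0^t\bar{\epsilon}(n,G)^{n-t}$ is monotonic, so its maximum on this interval is attained at an endpoint. When $\alpha_0\geq\bar{\epsilon}(n,G)$ the maximum is $\alpha_0^{n-1}\bar{\epsilon}(n,G)$, which is $\leq\epsilon(n,G)$ immediately from the defining inequality $\bar{\epsilon}(n,G)\leq\epsilon(n,G)/\alpha_0^{n-1}$. The complementary regime $\alpha_0<\bar{\epsilon}(n,G)$ would propagate through $\alpha\leq\alpha_0^n<\epsilon(n,G)$ to $3M_n|G|^2<2n!$; ruling this out reduces to the elementary inequality $3M_n|G|^2\geq 2n!$ for $n>3$, which in turn follows from the dominant contribution $n^n-(n-1)^n$ in $M_n$ versus the much smaller $n!$. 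I expect this numerical size check on $M_n$ to be the only real source of technical friction.

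With the uniform bound established, summing $|d^G_\chi(L^\gamma P_g)|^2\leq|G|^2\epsilon(n,G)^2$ over $g\in S_\gamma$ with the $1/|G|$ prefactor, and using $|S_\gamma|\leq|G|$, preserves the estimate at each $\gamma$. Summing over $\gamma\in\hat{\Delta}^*_G$ yields a total at most $M_n|G|^2\epsilon(n,G)^2=\tfrac{2}{3}n!\,\alpha^2$, and substitution into the identity of Theorem~\ref{maintheorem} delivers the claim.
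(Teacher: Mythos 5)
Your proof is correct and follows essentially the same route as the paper: the identity of Theorem~\ref{maintheorem}, the count $|\hat{\Delta}^*_G|\leq M_n$ from Lemma~\ref{lem1}, the per-term bound $|d^G_\chi(L^\gamma P_g)|\leq |G|\,\epsilon(n,G)$ via the triangle inequality and $\sum_{h\in G}|\chi(h)|\leq |G|$, and the collapsing choice of $\epsilon(n,G)$. The one point where you go beyond the paper is the product estimate: the paper writes $\prod_i|L_{i\gamma_{gh(i)}}|\leq \alpha_0^{n-1}|L_{i\gamma_s}|$ outright, which tacitly bounds the off-first-column factors by $\alpha_0$ as well (i.e.\ assumes $\bar{\epsilon}(n,G)\leq\alpha_0$), whereas you track the exact split $\alpha_0^{t}\bar{\epsilon}(n,G)^{n-t}$ and eliminate the regime $\alpha_0<\bar{\epsilon}(n,G)$ via the size check $3M_n|G|^2\geq 2n!$ --- a small gap in the paper's own argument that your endpoint analysis closes.
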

\begin{proof}	
	Let $\gamma=(\gamma_1,\dots,\gamma_n)\in \hat{\Delta}^*_G$.  Because of $(1,1,\dots,1)\notin \hat{\Delta}^*_G$, there exists $s$ such that $\gamma_s\neq 1$.  Since $|L_{kj}|\leq \epsilon(n,G)$ and $|L_{kj}|\leq 1$   for all $k,j$ such that $j\neq 1$, we compute that, for any $g\in G$, 
\begin{eqnarray*}
|d^G_\chi(L^\gamma P_g)|&=&|\sum_{h\in G} \chi(h)\prod_{i=1}^n L_{i\gamma_{gh(i)}}|\\
&\leq& \sum_{h\in G} |\chi(h)|\prod_{i=1}^n |L_{i\gamma_{gh(i)}}|\\
&\leq& \sum_{h\in G} |\chi(h)|\alpha_0^{n-1} |L_{i\gamma_s}|, \hbox{ for some $i$}\\
&\leq& \epsilon(n,G) \sum_{h\in G} |\chi(h)|.
\end{eqnarray*}
Then, by power mean inequality (or by Cauchy-Schwartz inequality),
$$|d^G_\chi(L^\gamma P_g)|^2\leq (\epsilon(n,G))^2 (\sum_{h\in G} |\chi(h)|)^2\leq (\epsilon(n,G))^2 (|G|\sum_{h\in G} |\chi(h)|^2) = (\epsilon(n,G))^2 |G|^2.   $$
Thus, for each $\gamma\in \hat{\Delta}^*_G $, $$\frac{1}{|G|} \sum_{g\in S_\gamma}|d^G_\chi(L^\gamma P_g)|^2 \leq \frac{1}{|G|}|S_\gamma|(\epsilon(n,G))^2 |G|^2\leq (\epsilon(n,G))^2 |G|^2,  $$
which also yields that, by Lemma \ref{lem1},
$$\sum_{\gamma\in \hat{\Delta}^*_G }\frac{1}{|G|} \sum_{g\in S_\gamma}|d^G_\chi(L^\gamma P_g)|^2\leq M_n (\frac{\alpha}{|G|}\sqrt{\frac{2n!}{3M_n}})^2 |G|^2=\frac{2}{3}n! \prod_{i=1}^n|L_{i1}|^2.  $$
By Theorem \ref{maintheorem}, the proof is completed.
\end{proof}

Now, by setting, for $n>3$, $$\epsilon_n:=\bar{\epsilon}(n,S_n)=\min\{\frac{\alpha}{\alpha_0^{n-1}}\sqrt{\frac{2}{3n! M_n}},1\},  $$
we see that $\epsilon_n\leq \bar{\epsilon}(n,G)$ for any finite group $G\leq S_n$.  Hence, the following holds:
\begin{cor}\label{cor2}
	Let $A=LL^*$ with $|L_{kj}|\leq \epsilon_n$ for all $k,j$ with $j\neq 1$.  Then $$\bar{d}^G_\chi(A)\leq \operatorname{per}(A),$$ for any group $G\leq S_n$ and any irreducible character $\chi$ of $G$.
\end{cor}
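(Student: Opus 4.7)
The strategy is to combine Theorem~\ref{main2} applied to the given pair $(G,\chi)$ with Theorem~\ref{maintheorem} applied to $(S_n,1)$, and then to handle the principal--character case by a short subgroup--index argument.

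First, since $\epsilon(n,G)=\tfrac{\alpha}{|G|}\sqrt{\tfrac{2n!}{3M_n}}$ is monotonically decreasing in $|G|$ and $|G|\leq n!$, we have $\epsilon_n=\bar{\epsilon}(n,S_n)\leq \bar{\epsilon}(n,G)$ for every $G\leq S_n$. Thus the hypothesis $|L_{kj}|\leq \epsilon_n$ (for $j\neq 1$) implies the hypothesis of Theorem~\ref{main2} for our $G$, and that theorem yields
\[
\bar{d}^G_\chi(A)\leq \det(A)+\Bigl(\delta_{\chi,1}|G|+\tfrac{2}{3}n!\Bigr)\prod_{i=1}^{n}|L_{i1}|^2.
\]
Second, applying Theorem~\ref{maintheorem} with group $S_n$ and character $1$ (so that $\bar{d}^{S_n}_{1}(A)=\operatorname{per}(A)$) expresses $\operatorname{per}(A)$ as $\det(A)+n!\prod_i|L_{i1}|^2$ plus a sum of non--negative squared moduli, so
\[
\operatorname{per}(A)\geq \det(A)+n!\prod_{i=1}^{n}|L_{i1}|^2.
\]

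If $\chi\neq 1$, then $\delta_{\chi,1}=0$ and the two displays combine immediately to give $\bar{d}^G_\chi(A)\leq \det(A)+\tfrac{2}{3}n!\prod_i|L_{i1}|^2\leq \operatorname{per}(A)$, settling this case.

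The expected main obstacle is the case $\chi=1$, where the extra summand $|G|\prod_i|L_{i1}|^2$ in the upper bound can overshoot the available gap $\tfrac{1}{3}n!\prod_i|L_{i1}|^2$ whenever $|G|>n!/3$. I would resolve this by case--splitting on $G$. Because $A_n$ is the unique subgroup of $S_n$ of index $2$, every $G\leq S_n$ with $G\notin\{S_n,A_n\}$ has index at least $3$, hence $|G|\leq n!/3$; then $|G|+\tfrac{2}{3}n!\leq n!$ and the two displayed bounds again combine to deliver $\bar{d}^G_1(A)\leq \operatorname{per}(A)$. The two remaining cases are handled directly: when $G=S_n$ the conclusion is the trivial equality $\bar{d}^{S_n}_{1}(A)=\operatorname{per}(A)$, and when $G=A_n$ it is precisely Proposition~\ref{propan}.
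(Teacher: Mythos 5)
Your proof is correct and follows essentially the same route as the paper: apply Theorem~\ref{main2} together with the lower bound $\operatorname{per}(A)\geq \det(A)+n!\prod_i|L_{i1}|^2$, then dispose of the $\chi=1$ case by Lagrange's theorem for $|G|\leq n!/3$ and by Proposition~\ref{propan} (resp.\ triviality) for $G=A_n$ (resp.\ $G=S_n$). The only cosmetic difference is that the paper also records the remark that $n\leq 3$ is already known, since Theorem~\ref{main2} and $\epsilon_n$ are only set up for $n>3$.
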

\begin{proof}
	Since $\bar{d}^G_\chi(A)\leq per(A)$ for any positive semi-definite matrix $A\in M_n(\mathbb{C})$ with $n\leq 3$, it suffices to consider the cases $n>3$. Thus, by Theorem \ref{main2}, we have
	\begin{eqnarray}\label{cons1}
	 \operatorname{per}(A)-\bar{d}^G_\chi(A)\geq (\frac{1}{3}n!-\delta_{\chi,1}|G|)\prod_{i=1}^n|L_{i1}|^2.   
	\end{eqnarray} When $|G|=n!$ and $\chi=1$,  $\bar{d}^G_\chi(A)$ becomes $\operatorname{per}(A)$ where the inequality is obviously true.  Also, when $|G|=n!$ but $\chi\neq 1$,  $\operatorname{per}(A)-\bar{d}^G_\chi(A)\geq (\frac{1}{3}n!)\prod_{i=1}^n|L_{i1}|^2\geq 0$.  Furthermore, when $|G|=\frac{n!}{2}$, it is well known that $G$ must be the alternating group $A_n$, which is proved that $d^{A_n}_1(A)\leq \operatorname{per}(A)$  in Proposition \ref{propan}.  If, however, $\chi\neq 1$, then $$\operatorname{per}(A)-\bar{d}^{A_n}_\chi(A)\geq (\frac{1}{3}n!)\prod_{i=1}^n|L_{i1}|^2\geq 0.$$  Hence, by Lagrange's Theorem ($|G|$ divides $|S_n|$), it remains only to consider groups $G$ with $|G|\leq \frac{n!}{3}$ and this is done by (\ref{cons1}).
\end{proof}
The criterion in Corollary \ref{cor2} provides us infinitely many infinite classes of positive semi-definite matrix affirming the conjecture.   
\begin{thm}\label{main3}
	Let $A=LL^*\in M_n(\mathbb{C})$ with $m:=\max\{|L_{ij}|\;|\; 2\leq i,j\leq n\}$ and $a:=\frac{\epsilon_n}{m}$.  If each entry in the first column of $A$ is not zero, then
	$$[A]:=\{L\operatorname{diag}(1,\lambda^2_2,\dots,\lambda^2_n)L^*\;|\; 0< \lambda_2,\dots,\lambda_n \leq a\}  $$
	is an infinite class of positive semi-definite matrix satisfying the permanent dominant conjecture.
\end{thm}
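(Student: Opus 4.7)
The plan is to realize every member of $[A]$ as $L'(L')^{*}$ for a lower triangular factor $L'$ that satisfies the entry bound of Corollary \ref{cor2}, so that the permanent dominant conjecture follows directly.

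For each admissible parameter $(\lambda_2,\dots,\lambda_n)\in (0,a]^{n-1}$, I would set $D:=\operatorname{diag}(1,\lambda_2,\dots,\lambda_n)$ and $L':=LD$. Then $L'$ is lower triangular (being a product of a lower triangular matrix with a diagonal one), and since $D$ is real diagonal
$$L\operatorname{diag}(1,\lambda_2^2,\dots,\lambda_n^2)L^{*}=LD^{2}L^{*}=(LD)(LD)^{*}=L'(L')^{*}.$$
This simultaneously exhibits the generic element of $[A]$ as a Gram matrix, hence positive semi-definite, and provides a concrete Cholesky-type factorization to which the machinery of Theorem \ref{maintheorem} and Corollary \ref{cor2} can be applied.

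Because $D_{11}=1$, the first column of $L'$ coincides with the first column of $L$. Consequently $\alpha=\prod_i|L_{i1}|$ and $\alpha_0=\max_i|L_{i1}|$, and hence the parameter $\epsilon_n$, are identical whether computed for $L$ or for $L'$. The hypothesis that the first column of $A$ has no zero entries forces every $L_{i1}$ to be nonzero (via $A_{i1}=L_{i1}\overline{L_{11}}$), so $\alpha>0$ and $\epsilon_n>0$, ensuring that the parameter interval $(0,a]$ is non-degenerate. Next I would verify the bound $|(L')_{kj}|\le \epsilon_n$ for every $k$ and every $j\ne 1$: for $j\ge 2$ one has $(L')_{kj}=L_{kj}\lambda_j$, which vanishes when $k=1$ (since $L$ is lower triangular) and is bounded by $m\lambda_j\le m a=\epsilon_n$ when $k\ge 2$. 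Applying Corollary \ref{cor2} to $L'(L')^{*}$ yields $\bar d^G_\chi(L'(L')^{*})\le \operatorname{per}(L'(L')^{*})$ for every subgroup $G\le S_n$ and every irreducible character $\chi$ of $G$, and Proposition \ref{pp1} extends this to arbitrary characters. Finally, $[A]$ is infinite because the parameter box $(0,a]^{n-1}$ is infinite and the diagonal entries of $L'(L')^{*}$ depend nontrivially on the $\lambda_j$, producing pairwise distinct matrices.

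I do not anticipate a substantive obstacle: the argument is a mild reparametrization that exploits the privileged role of the first column in the definition of $\epsilon_n$. The one mild caveat to flag is the degenerate case $m=0$, in which $a$ is formally undefined; in that situation $L$ itself already fulfils the hypothesis of Corollary \ref{cor2} vacuously, and the same reparametrization works for any positive choice of the $\lambda_j$, so the statement remains intact.
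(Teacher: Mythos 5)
Your proof is correct and follows essentially the same route as the paper's: factor each member of $[A]$ as $(LD)(LD)^{*}$ with $D=\operatorname{diag}(1,\lambda_2,\dots,\lambda_n)$, observe that the first column (and hence $\epsilon_n$) is unchanged while the remaining entries are scaled below $\epsilon_n$, and invoke Corollary \ref{cor2}. Your version is in fact slightly more careful than the paper's (you correctly write the scaling factor as $\lambda_j$ rather than $\lambda_i$, and you flag the degenerate case $m=0$).
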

\begin{proof}
	Let $\lambda_2,\dots,\lambda_n$ be positive real numbers lying in $\{r\in \mathbb{R}\;|\; 0< r\leq a\}$ and let $D:=\operatorname{diag}(1,\lambda_2,\dots,\lambda_n)$.  Then, the first columns of the lower triangular matrices $L$ and $L_D:=LD$ are identical.  This yields that $\epsilon_n(L_D)=\epsilon_n(L):=\epsilon_n$.   Since each entry in the first column of $A$ is not zero, so is $L$.  Thus $\epsilon_n>0$; namely $[A]$ is infinite.  Also, for $2\leq i,j\leq n$, $$|(L_D)_{ij}|=\lambda_i|L_{ij}|\leq \frac{\epsilon_n}{m}|L_{ij}|\leq \epsilon_n.$$
	By Corollary \ref{cor2},  $L_D L_D^*=LD^2L^*=L\operatorname{diag}(1,\lambda^2_2,\dots,\lambda^2_n)L^*$ affirms the conjecture.
\end{proof}
Note that if $a\geq 1$, then such positive semi-definite $A$ satisfies the conjecture.  Furthermore, for any complex diagonal matrix $D:=\operatorname{diag}(c_1,\dots,c_n)$ and $\sigma\in G$,
$$l_\sigma(DAD^*)= \prod_{i=1}^n (c_i A_{i\sigma(i)}\overline{c_{\sigma(i)}})=|\prod_{i=1}^n c_i|^2\prod_{i=1}^nA_{i\sigma(i)}= |\prod_{i=1}^n c_i|^2l_\sigma (A),$$
which yields that $\bar{d}^G_\chi(DAD^*)=|\prod_{i=1}^n c_i|^2\bar{d}^G_\chi(A)$.  Hence, for $\prod_{i=1}^n c_i\neq 0$, we have that $\bar{d}^G_\chi(DAD^*)\leq \operatorname{per}(DAD^*)$ if and only if $\bar{d}^G_\chi(A)\leq \operatorname{per}(A)$.  In particular, for non-zero complex numbers $c_1,\dots,c_n$, any matrix in the form 
$$ \operatorname{diag}(c_1,\dots,c_n)X\operatorname{diag}(\bar{c}_1,\dots,\bar{c}_n), 
 $$
 where $X\in [A]$ (in Theorem \ref{main3}), satisfies the permanent dominant conjecture.

\section*{Acknowledgments}
The author would like to thank anonymous referee(s) for reviewing this manuscript.  He also would like to thank Naresuan University (NU), and National Science, Research and Innovation Fund (NSRF): Grant NO. FRB660001/0179, for financial support.
\subsection*{Data Availability}
Data sharing not applicable to this article as no datasets were generated or analyses during the current study.
\subsection*{Conflict of interest}
The author has no relevant financial or non-financial interests to disclose.

\end{document}